\newcommand{\bea}{\begin{eqnarray}}
\newcommand{\eea}{\end{eqnarray}}
\newcommand{\bna}{\begin{eqnarray*}}
\newcommand{\ena}{\end{eqnarray*}}
\numberwithin{equation}{section}
\renewcommand{\thefootnote}{\fnsymbol{footnote}}
\theoremstyle{plain}
\newtheorem{theorem}{Theorem}[section]
\newtheorem{lemma}{Lemma}[section]
\newtheorem{proposition}{Proposition}[section]
\theoremstyle{definition}
\newtheorem{notation}{Notation}
\newtheorem{remark}{Remark}
\newcommand\blfootnote[1]{%
  \begingroup
  \renewcommand\thefootnote{}\footnote{#1}%
  \addtocounter{footnote}{-1}%
  \endgroup
}
\renewcommand{\Im}{\operatorname{Im}}
\begin{document}
\title{Sums of Fourier coefficients involving theta series and Dirichlet characters}

	\author{Yanxue Yu}
	\date{}
	\address{School of Mathematics and Statistics, Shandong University, Weihai\\Weihai, Shandong
		264209, China}
    \email{yanxueyu@mail.sdu.edu.cn}

\begin{abstract}
Let $f$ be a holomorphic or Maass cusp forms for $ \rm SL_2(\mathbb{Z})$ with normalized Fourier coefficients $\lambda_f(n)$  and
    \bna
    r_{\ell}(n)=\#\left\{(n_1,\cdots,n_{\ell})\in \mathbb{Z}^2:n_1^2+\cdots+n_{\ell}^2=n\right\}.
    \ena
Let $\chi$ be a primitive Dirichlet character of modulus $p$, a prime. In this paper, we are concerned with obtaining nontrivial estimates for the sum
	\bna
    \sum_{n\geq1}\lambda_f(n)r_{\ell}(n)\chi(n)w\left(\frac{n}{X}\right)
    \ena
for any $\ell \geq 3$, where $w(x)$ be a smooth function compactly supported in $[1/2,1]$.
\end{abstract}

\keywords{Fourier coefficients, Dirichlet character, Theta series, Circle method}

\blfootnote{{\it 2010 Mathematics Subject Classification}: 11F11, 11F27, 11F30, 11F66}

\maketitle
\section{Introduction}
Let $\{\lambda_F(n):n\geq1\}$ be the Hecke eigenvalues of a $\rm GL_m$ automorphic form $F$, $\chi$ be a primitive Dirichlet character of modulus a prime $p$ and $w$ be a smooth function.  It is a classical and important problem in analytic number theory to study the cancellation in sums of the form
    \bea\label{eq:S}
    S(X)=\sum_{n\geq1}\lambda_F(n)\chi(n)w\left(\frac{n}{X}\right),
    \eea
which arises in a variety of contexts. Nontrivial bounds of this sum has been studied intensively by many authors with various applications in areas such as subconvexity.

It is widely believed that the Hecke eigenvalues $\{\lambda_F(n) : n \geq1\}$ and the Dirichlet character  $\{\chi(n) : n \geq 1\}$  are not correlated, in the following sense
    \bna
    \sum_{n\geq1}\lambda_F(n)\chi(n)w\left(\frac{n}{X_m}\right)\ll_{F,w,A} X_m^{1+\varepsilon}
    \ena
for $X_m\ll p^{\frac{m}{2}+\varepsilon}$.  (For the case where $X_m\gg p^{\frac{m}{2}+\varepsilon}$ one can apply the  functional equation of $L(s,F)$ to either produce a nontrivial estimate or to reduce that  case to the current setting.)

In applications, one is more concerned with a power-saving estimate
    \bea\label{eq:nontirvial}
    \sum_{n\geq1}\lambda_F(n)\chi(n)w\left(\frac{n}{X_m}\right)\ll_{F,w} X_m^{1-\delta}
    \eea
for some $\delta>0$.
In general, this can be a challenging problem. In fact, it would imply a subconvexity bound for the $L$-function $L(s,F)$ in the $p$-aspect. Nontrivial results towards \eqref{eq:nontirvial} have been known for several cases.

$\bullet$ If $F=f$ is a $\rm SL_2(\mathbb{Z})$ Hecke cusp form, Munshi \cite{M} first established this result in his influential note on the Burgess Bound, demonstrating the following bound
    \bna
    \sum_{n\geq1}\lambda_f(n)\chi(n)w\left(\frac{n}{X_2}\right)\ll p^{\frac{3}{8}+\varepsilon}X_2^{\frac{1}{2}}+p^{-\frac{9}{8}+\varepsilon}X_2^{\frac{3}{2}}+p^{\frac{3}{4}+\varepsilon}
    \ena
for $ p^{\frac{1}{4}}<X_2<p^{1+\varepsilon}$.
Further related works on the Burgess bound can be found in \cite{AHLS}, while studies on Weyl-type bounds are available in \cite{Gho}.

$\bullet$ If $F=\pi$ is a $\rm SL_3(\mathbb{Z})$ Maass cusp form, it was proved by Munshi \cite{M1} that one has
    \bna
    \sum_{n\geq1}\lambda_\pi(m,n)\chi(n)w\left(\frac{m^2n}{X_3}\right)\ll_{\pi,w} p^{\frac{3}{4}-\delta_1}X_3^{\frac{1}{2}}
    \ena
for  $p^{\frac{3}{2}-\vartheta}\ll X_3\ll p^{\frac{3}{2}+\vartheta}$, where $\vartheta>0$ is sufficiently small.

$\bullet$ For the case where $F=f\otimes g$, a Rankin-Selberg convolution of two $\rm SL_2(\mathbb{Z})$ cusp form $f$ and $g$,  Ghosh \cite{Gho2} recently  established the following bound
    \bna
    \sum_{n\geq1}\lambda_f(n)\lambda_g(n)\chi(n)w\left(\frac{n}{X_4}\right)\ll_{f,g,w} p^{\frac{22}{23}+\varepsilon}X_4^{\frac{1}{2}},
    \ena
as long as $ p^{\frac{44}{23}}\ll X_4\ll p^{2+\varepsilon}$.

Motivated by this and with applications in mind, we shall consider the analogues of \eqref{eq:S}. In order to state our results, let us firstly set up some  basic notations.

	Let $H_{\kappa}$ denote the set of normalized primitive holomorphic cusp forms of even integral weight $\kappa$ for ${\rm SL}_2(\mathbb{Z})$. For $f(z)\in H_{\kappa}$, it has a Fourier expansion at the cusp $\infty$ given by
	\bna
	f(z)=\sum_{n=1}^{\infty}\lambda_{f}(n)n^{\frac{k-1}{2}}e(nz), \quad \Im z>0,	
    \ena
where $e(x):=\exp(2\pi{i}x)$ is an additive character as usual. The famous Ramanujan-Petersson conjecture, which was proved  by Deligne \cite{De}, asserts that
    \bea\label{eq:KS}
    \lambda_{f}(n)\ll d(n)\ll n^\varepsilon
    \eea
for all $n \geq 1$. Here $d(n)$ is the Dirichlet divisor function.
 	Similarly, let $S_{\mu}$ be the set of normalized primitive Maass cusp forms of Laplace eigenvalue $1/4+\mu^{2},\,\mu>0$. Then $f(z)\in S_{\mu}$ has a Fourier expansion at the cusp infinity
	\bna	
    f(z)=\sqrt{y}\sum_{n\neq0}\lambda_{f}(n)K_{ir}(2\pi|n|y)e(nx).
	\ena
	However, Ramanujan conjecture is still open for $f\in S_{\mu}$. The current best bound is due to Kim and Sarnak \cite{KS-2003}
	\bna\label{eq-KS}
	|\lambda_{f}(n)|\ll n^{\frac{7}{64}+\varepsilon}.
	\ena
For any $f\in H_{\kappa}$ or $S_{\mu}$, the Fourier coefficients $\lambda_{f}(n)$ are real, especially $\lambda_{f}(1)=1$. They also satisfy the multiplicative property
\bna
\lambda_f(mn)=\sum_{d|(m,n)}\mu(d)\lambda_f\left(\frac{m}{d}\right)\lambda_f\left(\frac{n}{d}\right).
\ena

Let
\bna r_{\ell}(n)=\#\left\{(n_1,\cdots,n_{\ell})\in \mathbb{Z}^2:n_1^2+\cdots+n_{\ell}^2=n\right\}. \ena
Then $r_\ell(n)$ is the $n$-th  Fourier coefficient of the modular form $\theta^\ell(z)$, where $\theta(z)$ is the classical Jacobi theta series
\bna
\theta(z)=\sum_{n\in\mathbb{Z}}e(n^2z).
\ena
By means of the results of \cite[Theorem A, B]{Bat} and \cite[Corollary 11.3]{Iwan}, the individual arithmetic function $r_\ell(n)$ satisfies
\bea\label{r}
r_{\ell}(n)\ll_{\ell,\varepsilon}n^{\frac{\ell}{2}-1+\varepsilon}
\eea
for any $\varepsilon > 0$.

In this paper, we are interested in bounding the sum
    \bna\label{S}
    \mathcal{S}(X)=\sum_{n\geq1}\lambda_f(n)r_{\ell}(n)\chi(n)w\left(\frac{n}{X}\right),
    \ena
where $ \lambda_f(n)$ are the Fourier coefficients of $\rm SL_2(\mathbb{Z})$ Hecke holomorphic or Hecke-Maass cusp form $f$, $\chi$ is a primitive Dirichlet character of modulus a prime $p$ and $w(x)$ is a smooth function compactly supported in $[1/2,1]$. Applying Cauchy-Schwarz inequality, individual bound \eqref{r} and the Rankin-Selberg estimates
    \bea\label{CSRS}
    \sum_{n\ll x}|\lambda_f(n)|^2\ll_{f,\varepsilon}x^{1+\varepsilon},
    \eea
we have the trivial bound
    \bna
    \mathcal{S}(X)\ll X^{\frac{\ell}{2}+\varepsilon}.
    \ena
Our main theorem gives a nontrivial power saving over this estimate.
    \begin{theorem}\label{Th11}
    Let $f\in H_\kappa$ or $S_\mu$ and $\chi$ be a primitive Dirichlet character of modulus $p$, a prime. Let $w:\mathbb{R}\to[0,\infty)$ be a smooth function compactly supported in $[1/2, 1]$  and satisfying
        \bna
        w^{(j)}(x)\ll_j \Delta^j \quad\text{ for}\quad j\geq 0 \quad
        \text{and} \quad
        \int|w^{(j)}(\xi)|\mathrm{d}\xi\ll \Delta^{j-1} \quad \text{for}\quad j\geq1,
        \ena
    where $1\leq\Delta<X$. Then for any $\ell \geq 3$, we have
	   \bna
        \sum_{n\geq1}\lambda_f(n)r_{\ell}(n)\chi(n)w\left(\frac{n}{X}\right)
        \ll_{f,\ell,\varepsilon} p^{\frac{\ell-2}{\ell+3}}X^{\frac{\ell}{2}-\frac{\ell-2}{\ell+3}+\varepsilon}
        \Delta^{\frac{\ell-2}{2(\ell+3)}}
	   \ena
for $p< X$.
    \end{theorem}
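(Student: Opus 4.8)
The plan is to separate the arithmetic factors $\lambda_f$ and $r_\ell$ by the circle method and then transform each by a Voronoi-type summation. First I would recast the sum geometrically: using $r_\ell(n)=\#\{\mathbf m\in\mathbb Z^\ell:|\mathbf m|^2=n\}$ with $|\mathbf m|^2=m_1^2+\cdots+m_\ell^2$, one has
\[
\mathcal S(X)=\sum_{\mathbf m\in\mathbb Z^\ell}\lambda_f(|\mathbf m|^2)\,\chi(|\mathbf m|^2)\,w\!\left(\frac{|\mathbf m|^2}{X}\right).
\]
The obstruction to applying Poisson summation in $\mathbf m$ directly is that $\lambda_f$ and $\chi$ are evaluated at the quadratic form $|\mathbf m|^2$. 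To remove it I would introduce an auxiliary variable $n$, detect $n=|\mathbf m|^2$ by the Duke--Friedlander--Iwaniec delta method, and keep $\lambda_f(n)\chi(n)w(n/X)$ attached to $n$. Here I would use the conductor-lowering form of the method in order to exploit $p<X$: after opening $\chi$ through the Gauss sum $\chi(n)=\tau(\bar\chi)^{-1}\sum_{a\,(\mathrm{mod}\,p)}\bar\chi(a)e(an/p)$, the congruence $n\equiv|\mathbf m|^2\pmod p$ is detected for free by the character, and only $(n-|\mathbf m|^2)/p=0$ need be resolved at scale $X/p$, so the delta expansion runs over moduli $q\le Q$ with $Q\asymp\sqrt{X/p}$ and the effective modulus throughout is $pq\ll\sqrt{pX}$ rather than $p\sqrt X$.

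For the resulting $n$-sum I would invoke the $\mathrm{GL}_2$ Voronoi summation formula. The coefficient $\lambda_f(n)$ is twisted by an additive character of modulus $pq$ and a slowly varying analytic weight, and Voronoi converts it into a dual sum $\sum_{\tilde n}\lambda_f(\tilde n)\,\mathrm{KL}(\cdots)\,\check w(\cdots)$ supported essentially on $\tilde n\ll (pq)^2/X$, which after the conductor-lowering is $\ll pX^{\varepsilon}$; the arithmetic weight is a Kloosterman-type sum and the analytic weight is a Bessel transform of $w$ whose size is governed by the derivative hypotheses, hence by $\Delta$. On this dual side Deligne's bound \eqref{eq:KS} and the Rankin--Selberg estimate \eqref{CSRS} keep the sum under control.

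For the $\mathbf m$-sum I would apply Poisson summation on the lattice $\mathbb Z^\ell$. The complete sum over residues modulo $pq$ factors as the $\ell$-th power of a one-dimensional quadratic Gauss sum, of magnitude $(pq)^{\ell/2}$, while the smooth weight is replaced by its Fourier transform evaluated at the dual lattice points $\mathbf k$, which are localized to a short range by the oscillation of the weight. This produces a main (diagonal) term from $\mathbf k=\mathbf 0$ together with an off-diagonal contribution. The essential feature is that $r_\ell$ has no square-root cancellation---by \eqref{r} its typical size is $|\mathbf m|^{\ell-2}$---so the entire saving on this side comes from the Gauss-sum factor of size $(pq)^{\ell/2}$ against the normalization $(pq)^{-\ell}$ of the Poisson step, a net $(pq)^{-\ell/2}$, together with the shortness of the dual lattice sum.

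Finally I would execute the summation over $a\pmod{pq}$, which fuses the two arithmetic weights into a complete character sum that can be evaluated by Gauss and Kloosterman bounds, and then estimate the remaining sums over $q$, $\tilde n$ and $\mathbf k$ by Cauchy--Schwarz together with \eqref{r} and \eqref{CSRS}, carrying the $\Delta$-dependence through the Bessel and Fourier weights. Balancing the diagonal term against the dual contribution and optimizing over $Q$ is what should yield the exponent $(\ell-2)/(\ell+3)$ and the factor $\Delta^{(\ell-2)/(2(\ell+3))}$. I expect the main obstacle to be precisely this balancing: because $\theta^\ell$ is an Eisenstein-type object with no individual cancellation in $r_\ell$, its dual sum is long and the coefficients $r_\ell$ are large, so the genuine saving must be extracted from the $\ell$-dimensional Gauss sum and from its interaction with the $\mathrm{GL}_2$ Voronoi dual. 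Keeping all of this uniform in $\ell$---and correctly handling the half-integral-weight Gauss sums that arise when $\ell$ is odd---is the technical heart of the argument.
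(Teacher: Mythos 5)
Your strategy is genuinely different from the paper's, and as written it contains gaps that keep it from being a proof. The paper does not use the Duke--Friedlander--Iwaniec delta method at all: it uses the classical Hardy--Littlewood circle method, writing $\mathcal S(X)=\int_0^1 F^\ell(\alpha)G(\alpha)\,\mathrm d\alpha$ with $F(\alpha)=\sum_{|m|\le X^{1/2}}e(\alpha m^2)$ and $G(\alpha)=\sum_n\lambda_f(n)\chi(n)e(-\alpha n)w(n/X)$, and dissects into major arcs $q\le P$ and minor arcs. On the minor arcs the entire saving comes from Weyl's inequality and Hua's lemma applied to $F$, together with Parseval and \eqref{CSRS} for $G$; on the major arcs it opens $\chi$ by the Gauss sum and applies the $\mathrm{GL}_2$ Voronoi formula to the $n$-sum once --- there is no Poisson summation in the $\mathbf m$-variables and no $\ell$-dimensional quadratic Gauss sum to evaluate. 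The exponent in the theorem arises from balancing the major-arc bound $pX^{3/2+\ell/2}Q^{-5/2}\Delta^{1/2}$ against the minor-arc bound $X^{1+\varepsilon}Q^{(\ell-2)/2}$ under $PQ=X$.

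Two concrete problems with your sketch. First, the conductor-lowering step is misstated: expanding $\chi(n)=\tau(\bar\chi)^{-1}\sum_{a}\bar\chi(a)e(an/p)$ produces additive characters in $n$ alone and does not detect the congruence $n\equiv|\mathbf m|^2\pmod p$; to lower the conductor you must insert the separate average $p^{-1}\sum_{b\bmod p}e\bigl(b(n-|\mathbf m|^2)/p\bigr)$ into the delta symbol, as in Munshi's work, and this changes the structure of the character sums you subsequently have to bound. Second, and more importantly, every quantitative claim is deferred: you assert that the Gauss-sum factors, the dual lengths after Poisson and Voronoi, and the final optimization ``should yield'' the exponent $(\ell-2)/(\ell+3)$ and the factor $\Delta^{(\ell-2)/(2(\ell+3))}$, but no estimate is carried out, and it is not clear that a uniform Poisson treatment of all moduli $q\le\sqrt{X/p}$ reproduces the saving the paper extracts from Weyl and Hua precisely in the range where $q$ is large and the dual $\mathbf k$-sum is no longer short. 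As it stands this is a plausible research plan in the style of Munshi's delta-method papers, not a proof of the stated bound.
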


Further, using the estimates of $\lambda_f(n)$ in short intervals, we can derive the following theorem.
\begin{theorem}\label{Th12}
Same notation and assumptions as in Theorem \ref{Th11}.
Then for any $\ell \geq 3$, we have
	\bna \sum_{n\leq X}\lambda_f(n)r_{\ell}(n)\chi(n)
    \ll_{f,\ell,\varepsilon}  p^{\frac{\ell-2}{2\ell+1}}X^{\frac{\ell}{2}-\frac{\ell-2}{2\ell+1}}	\ena
for $p< X$.
\end{theorem}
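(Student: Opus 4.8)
The plan is to deduce Theorem~\ref{Th12} from Theorem~\ref{Th11} by replacing the sharp cutoff at $X$ by a smooth weight of transition scale $\Delta$ and then controlling the error this introduces. Writing $a_n=\lambda_f(n)r_\ell(n)\chi(n)$, I would fix a smooth $V\colon\mathbb{R}_{\ge0}\to[0,1]$ with $V(t)=1$ for $t\le1$, $V(t)=0$ for $t\ge 1+\Delta^{-1}$ and $V^{(j)}\ll_j\Delta^{j}$. Since $V(n/X)$ equals $\mathbf 1_{n\le X}$ except on the window $X<n\le X(1+\Delta^{-1})$, one has the exact identity
\[
\sum_{n\le X}a_n=\sum_{n\ge1}a_n\,V\!\left(\frac nX\right)-\sum_{X<n\le X(1+\Delta^{-1})}a_n\,V\!\left(\frac nX\right).
\]
For the first, ``smooth'', term I would insert a dyadic partition of unity $V(n/X)=\sum_{Y}V_Y(n)$ over $Y=X,X/2,X/4,\dots$, each $V_Y$ localizing to $n\asymp Y$ with support in a rescaled $[1/2,1]$; only the top block $Y\asymp X$ inherits derivatives of size $\Delta$, while the bulk blocks have bounded derivatives. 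Applying Theorem~\ref{Th11} to each block (with smoothing parameter $\Delta$ at the top and $1$ on the bulk, the blocks with $Y\le p$ being estimated trivially and absorbed) and summing the geometric series in $Y$, which is dominated by $Y\asymp X$, gives, with $A=\tfrac{\ell-2}{\ell+3}$,
\[
\sum_{n\ge1}a_n\,V\!\left(\frac nX\right)\ll_{f,\ell,\varepsilon}p^{A}X^{\frac{\ell}{2}-A+\varepsilon}\Delta^{A/2}.
\]

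The second term, the transition error, is supported on a short interval $I$ with $|I|\asymp X/\Delta$, and here I would simply bound $|a_n|\le|\lambda_f(n)|r_\ell(n)$ and use that $I$ is short. Invoking the pointwise bound \eqref{r}, so that $r_\ell(n)\ll X^{\ell/2-1+\varepsilon}$ on $I$, and then Cauchy--Schwarz followed by the Rankin--Selberg estimate \eqref{CSRS},
\[
\sum_{n\in I}|\lambda_f(n)|r_\ell(n)\ll X^{\frac\ell2-1+\varepsilon}\Big(\sum_{n\in I}1\Big)^{1/2}\Big(\sum_{n\le 2X}|\lambda_f(n)|^2\Big)^{1/2}\ll X^{\frac\ell2-1+\varepsilon}|I|^{1/2}X^{1/2}\ll X^{\frac\ell2+\varepsilon}\Delta^{-1/2}.
\]
It is exactly here that the size of $\lambda_f(n)$ on the short window $I$ is used, consistent with the short-interval input announced before the theorem.

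It remains to optimize. Balancing the smooth main term $p^{A}X^{\ell/2-A}\Delta^{A/2}$ against the transition error $X^{\ell/2}\Delta^{-1/2}$ forces $\Delta^{(A+1)/2}=(X/p)^{A}$, i.e. $\Delta=(X/p)^{\frac{2(\ell-2)}{2\ell+1}}$, which indeed lies in $[1,X)$ whenever $p<X$; substituting back produces the claimed bound $p^{\frac{\ell-2}{2\ell+1}}X^{\frac{\ell}{2}-\frac{\ell-2}{2\ell+1}}$. I expect the transition error to be the main obstacle: one must choose the smoothing fine enough (large $\Delta$) to kill the error, yet the same $\Delta$ degrades Theorem~\ref{Th11} through the factor $\Delta^{A/2}$, so the whole argument hinges on securing the clean $\Delta^{-1/2}$ saving over $I$ from \eqref{r} and \eqref{CSRS}; any weaker control there would worsen the exponent $\tfrac{\ell-2}{2\ell+1}$.
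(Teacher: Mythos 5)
Your proposal is correct and follows essentially the same route as the paper: smooth the sharp cutoff at transition scale $X/\Delta$, apply Theorem \ref{Th11} to the smoothed sum, bound the transition error by $X^{\ell/2+\varepsilon}\Delta^{-1/2}$ via the pointwise bound \eqref{r} together with Cauchy--Schwarz and Rankin--Selberg on the short window, and optimize to $\Delta=(X/p)^{\frac{2(\ell-2)}{2\ell+1}}$. The only cosmetic difference is that the paper smooths each dyadic block $X/2<n\le X$ separately and sums at the end, whereas you smooth once at the top and dyadically decompose the smooth weight; the estimates and the final balancing are identical.
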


\begin{remark}
For the sake of exposition we will only present the case of holomorphic cusp forms for $\rm SL_2(\mathbb{Z})$ as the proof for Maass forms follows in a similar manner.
\end{remark}

\begin{notation}
Throughout the paper, the letters $q$, $\ell$, $m$ and $n$, with or without subscript, denote integers. The letters $\varepsilon$ and $A$  denote arbitrarily small and large positive constants, respectively, not necessarily the same at different occurrences. The symbol $\ll_{a,b,c}$ denotes that the implied constant depends at most on $a$, $b$, and $c$, and $q\sim C$ means $C<q\leq 2C$.
\end{notation}

\section{Preliminary}
In this section, we will briefly give some fundamental tools which play a crucial role in the proof of our main results. First we recall the Voronoi summation formula for $\rm GL_2$ (see \cite[(1.12), (1.15)]{MS}).

\begin{lemma}\label{Voro}
Let $f$ be a holomorphic form with normalized Fourier coeffcient $\lambda_f(n)$ of weight $\kappa\geq2$ for $\rm SL_2(\mathbb{Z})$. Suppose that $\phi(x)\in C_c^{\infty}(0, \infty)$. Let $a,q\in\mathbb{Z}$ with $q\neq0$, $ (a,q)=1$ and $a\overline{a}\equiv1\;(\text{{\rm mod }} q)$. Then
    \bea\label{Voronoi summmation}
    \sum_{n=1}^{\infty}\lambda_f(n)e\left(\frac{an}{q}\right)\phi(n)
    =q\sum_{n=1}^{\infty}\frac{\lambda_f(n)}{n}e\left(-\frac{\overline{a} n}{q}\right)\Phi\left(\frac{n}{q^2}\right),
    \eea
where for $\sigma>-1-(\kappa+1)/2$,
    \bea\label{integral}
    \Phi(x)= i^{\kappa-1}\frac{1}{2\pi^2}\int_{(\sigma)}(\pi^2 x)^{-s}\rho_f (s)\widetilde{\phi}(-s)\mathrm{d}s 
    \eea
with
    \bna\label{gamma}
    \rho_f (s)=\frac{\Gamma\left(\frac{1+s+(\kappa+1)/2}{2}\right)\Gamma\left(\frac{1+s+(\kappa-1)/2}{2}\right)}
    {\Gamma\left(\frac{-s+(\kappa+1)/2}{2}\right)
    \Gamma\left(\frac{-s+(\kappa-1)/2}{2}\right)}.
    \ena
Here $\widetilde{\phi}(s)=\int_{0}^{\infty}\phi(u)u^{s-1}\mathrm{d}u$ is the Mellin transform of $\phi$.
\end{lemma}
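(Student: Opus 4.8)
The plan is to obtain the formula from the functional equation of the additively twisted $L$-function $L_f(s,a/q)=\sum_{n\geq1}\lambda_f(n)e(an/q)n^{-s}$ together with Mellin inversion. First I would open up the test function: since $\phi\in C_c^\infty(0,\infty)$, its Mellin transform $\widetilde\phi(s)$ is entire and decays faster than any power of $|s|$ in vertical strips, and Mellin inversion gives $\phi(n)=\frac{1}{2\pi i}\int_{(\sigma)}\widetilde\phi(s)n^{-s}\,ds$ for $\sigma>1$. Substituting and interchanging (legitimate by absolute convergence for $\sigma>1$) turns the left-hand side into
\[ \sum_{n\geq1}\lambda_f(n)e\left(\frac{an}{q}\right)\phi(n)=\frac{1}{2\pi i}\int_{(\sigma)}\widetilde\phi(s)\,L_f(s,a/q)\,ds. \]

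The analytic input is the functional equation of $L_f(s,a/q)$, which encodes the modularity of $f$. Choosing $b,d\in\mathbb{Z}$ with $ad-bq=1$, the matrix $\gamma=\begin{pmatrix}a&b\\q&d\end{pmatrix}\in\mathrm{SL}_2(\mathbb{Z})$ satisfies $d\equiv\overline{a}\ (\mathrm{mod}\ q)$, and a direct computation gives, for $y>0$, $\tfrac aq+iy=\gamma\!\left(-\tfrac dq+\tfrac{i}{q^2y}\right)$ together with $q\!\left(-\tfrac dq+\tfrac{i}{q^2y}\right)+d=\tfrac{i}{qy}$. Hence the weight-$\kappa$ transformation law $f(\gamma z)=(qz+d)^\kappa f(z)$ yields
\[ f\left(\frac aq+iy\right)=\left(\frac{i}{qy}\right)^{\kappa}f\left(-\frac dq+\frac{i}{q^2y}\right). \]
Expanding both sides into Fourier series, taking the Mellin transform $\int_0^\infty(\cdot)\,y^{s-1}\,dy$ (each side converges since $f$ is a cusp form and so decays exponentially as $y\to0$ and $y\to\infty$), and using $e(-dm/q)=e(-\overline{a}m/q)$, I obtain after renormalizing the functional equation
\[ \frac{\Gamma\!\left(s+\frac{\kappa-1}{2}\right)}{(2\pi)^s}L_f(s,a/q)=i^{\kappa}q^{1-2s}\frac{\Gamma\!\left(1-s+\frac{\kappa-1}{2}\right)}{(2\pi)^{1-s}}L_f(1-s,-\overline{a}/q), \]
with $L_f(s,a/q)$ entire because $f$ is cuspidal.

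Next I would insert this functional equation into the contour integral and shift the line of integration from $\Re s=\sigma>1$ to $\Re s=\sigma'<0$; no poles are crossed, since $L_f(1-s,-\overline{a}/q)$ is entire and the poles of $\Gamma(1-s+\tfrac{\kappa-1}{2})$ lie to the right of the original line. On the new line $\Re(1-s)>1$, so I expand $L_f(1-s,-\overline{a}/q)=\sum_{m\geq1}\lambda_f(m)e(-\overline{a}m/q)m^{s-1}$ and interchange summation and integration (justified by the super-polynomial decay of $\widetilde\phi$ against the Stirling growth of the gamma ratio). This produces
\[ q\sum_{m\geq1}\frac{\lambda_f(m)}{m}e\left(-\frac{\overline{a}m}{q}\right)\cdot\frac{i^{\kappa}}{2\pi i}\int_{(\sigma')}\widetilde\phi(s)\,(2\pi)^{2s-1}\left(\frac{m}{q^2}\right)^{s}\frac{\Gamma\!\left(1-s+\frac{\kappa-1}{2}\right)}{\Gamma\!\left(s+\frac{\kappa-1}{2}\right)}\,ds, \]
which already has the shape of the claimed dual sum.

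Finally I would identify the inner integral with $\Phi(m/q^2)$. After the change of variable $s\mapsto-s$ the gamma ratio becomes $\Gamma(1+s+\tfrac{\kappa-1}{2})/\Gamma(-s+\tfrac{\kappa-1}{2})$; applying the Legendre duplication formula $\Gamma(w)\Gamma(w+\tfrac12)=2^{1-2w}\sqrt{\pi}\,\Gamma(2w)$ to numerator and denominator converts this quotient of single gammas into the product form defining $\rho_f$, giving the identity $\Gamma(1+s+\tfrac{\kappa-1}{2})/\Gamma(-s+\tfrac{\kappa-1}{2})=2^{2s+1}\rho_f(s)$. Collecting the remaining powers of $2$, $\pi$ and $i$ then matches the constant $i^{\kappa-1}/(2\pi^2)$ and the factor $(\pi^2 m/q^2)^{-s}$, yielding exactly $\Phi(m/q^2)$. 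The conceptual heart is the one-line modular transformation that gives the functional equation; the part needing real care is the analytic bookkeeping — establishing entirety of the twisted $L$-function from cuspidality to license the contour shift, justifying the term-by-term expansion of the dual series, and reconciling the gamma factors and normalizing constants through the duplication formula.
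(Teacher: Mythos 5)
Your proposal is correct, but it is worth noting that the paper does not prove this lemma at all: it simply quotes the formula from Miller--Schmid \cite{MS}, where it is derived (in much greater generality, including $\rm GL(3)$) via the theory of automorphic distributions. What you give instead is the classical, self-contained Hecke-style argument: Mellin inversion, the functional equation of the additively twisted $L$-function $L_f(s,a/q)$ obtained from the modular transformation under $\gamma=\begin{pmatrix}a&b\\ q&d\end{pmatrix}$, a contour shift licensed by entirety (cuspidality), and term-by-term expansion of the dual series. I checked the two computational pivots of your argument and both are exact: the matrix identity $\gamma\left(-\tfrac dq+\tfrac{i}{q^2y}\right)=\tfrac aq+iy$ with automorphy factor $\tfrac{i}{qy}$ gives precisely your completed functional equation
\begin{equation*}
\frac{\Gamma\left(s+\frac{\kappa-1}{2}\right)}{(2\pi)^s}L_f(s,a/q)
= i^{\kappa}q^{1-2s}\frac{\Gamma\left(1-s+\frac{\kappa-1}{2}\right)}{(2\pi)^{1-s}}L_f(1-s,-\overline{a}/q),
\end{equation*}
and Legendre duplication indeed yields $\Gamma\left(1+s+\tfrac{\kappa-1}{2}\right)/\Gamma\left(-s+\tfrac{\kappa-1}{2}\right)=2^{2s+1}\rho_f(s)$, after which the powers of $2$ and $\pi$ collapse as $(2\pi)^{-2s-1}2^{2s+1}=\pi^{-2s-1}$ and the prefactor $i^{\kappa}/i=i^{\kappa-1}$ reproduces the constant $i^{\kappa-1}/(2\pi^2)$ and the kernel $(\pi^2 x)^{-s}$ exactly as in \eqref{integral}. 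The trade-off between the two routes is the usual one: the citation to \cite{MS} buys generality and brevity, while your argument is elementary, works line-by-line for level-one holomorphic forms (the only case the paper needs for its holomorphic argument), and makes the normalization of $\rho_f$ transparent rather than a matter of matching conventions with the reference. One small point of care: when you shift from $\Re s=\sigma>1$ to $\sigma'<0$ after inserting the functional equation, you should take $\sigma<1+\tfrac{\kappa-1}{2}$ so that the poles of $\Gamma\left(1-s+\tfrac{\kappa-1}{2}\right)$ genuinely sit to the right of the starting line; alternatively, shift before substituting, since $\widetilde{\phi}(s)L_f(s,a/q)$ is entire, as you also observe.
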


We also require an asymptotic expansion for $\Phi(x)$. The asymptotic expansion for that integral was established by Lin and Sun \cite[Lemma 3.2]{LS}.
    \begin{lemma}\label{in}
    For any fixed integer $J\geq1$ and $xX\gg 1$, we have
    \bna
    \Phi(x)=x \int_0^\infty (xy)^{-\frac{1}{4}}\phi(y)\sum_{j=0}^{J}\frac{c_{j} e(2 \sqrt{xy})+d_{j} e(-2 \sqrt{xy})}{(xy)^{\frac{j}{2}}}\mathrm{d}y+O_{\kappa,J}\left(x^{\frac{1}{4}-\frac{J}{2}}\right),
    \ena
where $c_{j}$ and $d_{j}$ are constants depending on $\kappa$.
\end{lemma}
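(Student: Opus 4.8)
The plan is to evaluate the Mellin--Barnes integral \eqref{integral} in closed form as a Bessel integral and then feed in the classical large-argument asymptotics of $J_\nu$. The first step is to collapse the four gamma factors in $\rho_f(s)$ into a single quotient via the Legendre duplication formula $\Gamma(z)\Gamma(z+\tfrac12)=2^{1-2z}\sqrt{\pi}\,\Gamma(2z)$. The two numerator arguments are $\tfrac{s}{2}+\tfrac{\kappa+3}{4}$ and $\tfrac{s}{2}+\tfrac{\kappa+1}{4}$, which differ by exactly $\tfrac12$, and likewise the two denominator arguments differ by $\tfrac12$; applying duplication to each pair gives
\[
\rho_f(s)=2^{-2s-1}\,\frac{\Gamma\!\left(s+\tfrac{\kappa+1}{2}\right)}{\Gamma\!\left(\tfrac{\kappa-1}{2}-s\right)}.
\]
Absorbing $2^{-2s}$ into $(\pi^2x)^{-s}$ converts the prefactor into $(4\pi^2x)^{-s}$, so only one gamma ratio remains.

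Next I would write $\widetilde{\phi}(-s)=\int_0^\infty\phi(y)y^{-s-1}\,\mathrm{d}y$ and interchange the $s$- and $y$-integrals. This is justified because $\phi\in C_c^\infty(0,\infty)$ forces $\widetilde{\phi}(-s)$ to decay faster than any polynomial on vertical lines, while Stirling's formula shows the gamma quotient grows only polynomially in $|\Im s|$, so the double integral converges absolutely. The inner $s$-integral is the universal kernel
\[
I(u)=\frac{1}{2\pi i}\int_{(\sigma)}u^{-s}\,\frac{\Gamma\!\left(s+\tfrac{\kappa+1}{2}\right)}{\Gamma\!\left(\tfrac{\kappa-1}{2}-s\right)}\,\mathrm{d}s,\qquad u=4\pi^2xy .
\]
Computing the Mellin transform of $u\,J_{\kappa-1}(2\sqrt{u})$ (equivalently, summing the residues at $s=-\tfrac{\kappa+1}{2}-m$, $m\geq0$, against the Taylor series of $J_{\kappa-1}$) identifies $I(u)=u\,J_{\kappa-1}(2\sqrt{u})$. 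Reassembling yields the clean closed form
\[
\Phi(x)=i^{\kappa-1}\,x\int_0^\infty\phi(y)\,J_{\kappa-1}\!\left(4\pi\sqrt{xy}\right)\,\mathrm{d}y,
\]
which is exactly the expected shape of the $\mathrm{GL}_2$ Voronoi kernel.

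Finally I would insert the standard asymptotic expansion $J_\nu(z)=\sqrt{2/\pi z}\sum_{j=0}^{J}z^{-j}\big(\alpha_j e^{iz}+\beta_j e^{-iz}\big)+O_{\nu,J}(z^{-J-3/2})$ with $z=4\pi\sqrt{xy}$. Since $e^{\pm iz}=e(\pm 2\sqrt{xy})$, $z^{-1/2}\asymp(xy)^{-1/4}$ and $z^{-j}\asymp(xy)^{-j/2}$, the leading part reproduces precisely the displayed series once the $\kappa$-dependent constants $\alpha_j,\beta_j$ (together with $i^{\kappa-1}$ and the phase $e^{\mp i(\nu\pi/2+\pi/4)}$) are folded into $c_j$ and $d_j$. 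Multiplying the Bessel remainder by $x$ and integrating against $\phi$ (supported on $y\asymp X$) contributes $x\cdot x^{-J/2-3/4}\int\phi(y)y^{-J/2-3/4}\,\mathrm{d}y\ll_{\kappa,J}x^{1/4-J/2}$, which is the claimed error; note that the hypothesis $xX\gg1$ is precisely what makes $z=4\pi\sqrt{xy}$ large throughout $\supp\phi$, so the Bessel asymptotic is valid uniformly on the range of integration.

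The main obstacle is the uniform control of the error term rather than the identification, which is algebraic. One must use a version of the Bessel asymptotics whose remainder is uniform down to $z\asymp1$ and then integrate it against $\phi$, tracking the (finitely many) normalized norms of $\phi$ on which the implied constant depends; this is where the precise exponent $x^{1/4-J/2}$ and the cutoff $xX\gg1$ are pinned down. An equivalent route that sidesteps the explicit Bessel function is to apply Stirling directly to the gamma quotient in \eqref{integral} and evaluate the resulting exponential $s$-integral by stationary phase, the saddle at $|\Im s|\asymp\sqrt{xy}$ producing the same factors $e(\pm2\sqrt{xy})$ and the descending powers of $(xy)^{1/2}$; the delicate point there is identical, namely a remainder estimate that survives integration in $y$.
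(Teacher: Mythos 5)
Your derivation is correct in substance, but it is worth knowing that the paper itself does not prove this lemma at all: Lemma \ref{in} is quoted directly from Lin and Sun \cite[Lemma 3.2]{LS}, so what you have written supplies the argument that the paper leaves as a citation. Your route is the classical one behind the $\rm GL_2$ Voronoi kernel, and the computations check out: the numerator arguments $\tfrac{s}{2}+\tfrac{\kappa+3}{4}$, $\tfrac{s}{2}+\tfrac{\kappa+1}{4}$ (and likewise the denominator arguments) indeed differ by $\tfrac12$, duplication gives $\rho_f(s)=2^{-2s-1}\Gamma\left(s+\tfrac{\kappa+1}{2}\right)/\Gamma\left(\tfrac{\kappa-1}{2}-s\right)$, the Mellin pair $\int_0^\infty u\,J_{\kappa-1}(2\sqrt u)\,u^{s-1}\mathrm{d}u=\Gamma\left(s+\tfrac{\kappa+1}{2}\right)/\Gamma\left(\tfrac{\kappa-1}{2}-s\right)$ identifies your kernel $I(u)$, and the Bessel remainder integrates to $O_{\kappa,J}(x^{1/4-J/2})$ for fixed $\phi$, exactly as claimed. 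Two small points deserve attention. First, your closed form is missing a factor of $2\pi i$ (the paper's $\int_{(\sigma)}$ does not carry the $\tfrac{1}{2\pi i}$ normalization), but this is harmless since it is absorbed into the unspecified constants $c_j,d_j$. Second, your Fubini justification is slightly circular as stated: once $\widetilde{\phi}(-s)$ is unfolded into the $y$-integral, its rapid decay in $\Im s$ is no longer available, and the bare kernel integral $I(u)$ converges absolutely only when the gamma quotient, of size $(1+|\Im s|)^{2\sigma+1}$, is integrable, i.e.\ for $\sigma<-1$; you therefore need to place the contour in the window $-\tfrac{\kappa+1}{2}<\sigma<-1$ (nonempty since $\kappa\geq 2$), where both the interchange and the Bessel identification are legitimate. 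This is a fixable technicality, not a gap in the method; with it patched, your proof is a complete and self-contained substitute for the citation.
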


Next we evaluate the integral  $\Phi$ in the Voronoi summation \eqref{Voronoi summmation}.
\begin{lemma}\label{evaluation of G}
Let $ \Phi(x)$ be defined as in \eqref{integral}. Let $\phi(x)$ be a fixed smooth function compactly supported on $[aX, bX]$ with $b > a > 0$ and satisfying
    \bna
    \phi^{(j)}(x)\ll_j (X/R)^{-j} \quad\text{ for}\; j\geq 0 \quad
    \text{and} \quad
    \int|\phi^{(j)}(\xi)|\mathrm{d}\xi \ll\frac{1}{(X/R)^{j-1}} \quad \text{for}\; j\geq1,
    \ena
for some $R\geq1$. Then we have
    \bna
    \Phi(x)=
    \left\{
    \begin{array}{ll}
    X^{-A} , & \text{if }\, x > R^{2+\varepsilon}X^{-1} ,\\
    (xX)^{\frac{1}{4}}, & \text{if }\, X^{-1}\ll x \leq R^{2+\varepsilon}X^{-1} ,\\
    (xX)^{\frac{1}{2}}R^\varepsilon, & \text{if }\, x \ll X^{-1}.
    \end{array} \right.
    \ena
\begin{proof}
For $s=\sigma+i\tau,\, \sigma\geq j-1,\, j\geq1$, by integration by parts $j$ times, one has
    \bna
    \widetilde{\phi}(-s)
    &=&\frac{1}{s(s-1)\cdots(s-j+1)}\int_{0}^{\infty}\phi^{(j)}(u)u^{-s+j-1}\mathrm{d}u\\
    &\ll&\frac{X^{-\sigma+j-1}}{(1+|\tau|)^j}\int_{0}^{\infty}|\phi^{(j)}(u)|\mathrm{d}u\\
    &\ll& \frac{X^{-\sigma}}{R}\left(\frac{R}{1+|\tau|}\right)^j.
    \ena
Thus by Stirling's approximation, we have
    \bna
    \Phi(x)
    &\ll&x^{-\sigma}\int_{-\infty}^{+\infty}(1+|\tau|)^{2(\sigma+\frac{1}{2})}\frac{X^{-\sigma}}{R}\left(\frac{R}{1+|\tau|}\right)^j\mathrm{d}\tau\\
    &\ll&(xX)^{-\sigma}R^{j-1}\int_{-\infty}^{+\infty}\frac{1}{(1+|\tau|)^{j-2\sigma-1}}\mathrm{d}\tau.
    \ena
Take an appropriate $\sigma$ such that $j=2\sigma+2+\varepsilon$ is an integer for some $\varepsilon>0$. Then we deduce that
    \bna
    \Phi(x)
    \ll(xX)^{-\sigma}R^{2\sigma+1+\varepsilon}
    \ll R^{1+\varepsilon}\left(\frac{R^2}{xX}\right)^\sigma.
    \ena
Thus we can write, for $x>R^{2+\varepsilon}X^{-1}$,
    \bna
    \Phi(x)\ll X^{-A}.
    \ena

For $xX\gg 1$, by virtue of Lemma \ref{in}, we infer that
    \bna
    \Phi(x)\ll x\sum_{j=0}^{J}\left|\int_{0}^{\infty}(xy)^{-\frac{j}{2}-\frac{1}{4}}\phi(y)
    \left(c_j e(2\sqrt{xy})+d_j e(-2\sqrt{xy})\right)\mathrm{d}y\right|+(xX)^{\frac{1}{4}-\frac{J}{2}}.
    \ena
By partial integration, it is easy to see that
    \bna
    \Phi(x)\ll \sum_{j=0}^{J}(xX)^{\frac{1}{4}-\frac{j}{2}}+(xX)^{\frac{1}{4}-\frac{J}{2}}
    \ll (xX)^{\frac{1}{4}}.
    \ena

For $xX\ll 1$, we take $ \sigma=-\frac{1}{2}$ to get
    \bna
    \Phi(x)\ll (xX)^{\frac{1}{2}}R^\varepsilon.
    \ena
This completes the proof of the lemma \ref{evaluation of G}.\\
\end{proof}
\end{lemma}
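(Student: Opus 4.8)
The plan is to work directly with the Mellin--Barnes integral \eqref{integral} in the two extreme ranges of $x$ and to fall back on the asymptotic expansion of Lemma \ref{in} in the intermediate, oscillatory range. The two analytic inputs are a bound for the Mellin transform $\widetilde{\phi}(-s)$ and Stirling's formula for the gamma ratio $\rho_f(s)$. Writing $s=\sigma+i\tau$ and integrating by parts $j$ times, the hypotheses $\phi^{(j)}(x)\ll(X/R)^{-j}$ and $\int|\phi^{(j)}|\ll(X/R)^{1-j}$ convert each power of $\tau$ into a power of $R/(1+|\tau|)$, giving $\widetilde{\phi}(-s)\ll\tfrac{X^{-\sigma}}{R}\bigl(\tfrac{R}{1+|\tau|}\bigr)^{j}$ for every $j\ge1$, while Stirling applied to the four gamma factors yields $\rho_f(\sigma+i\tau)\ll(1+|\tau|)^{2\sigma+1}$ on vertical lines. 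Substituting both into \eqref{integral} and carrying out the $\tau$-integration (with $j$ of size about $2\sigma+2$ to secure absolute convergence) produces the master bound $\Phi(x)\ll R^{1+\varepsilon}\bigl(R^{2}/(xX)\bigr)^{\sigma}$.

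The large-$x$ range now follows at once: for $x>R^{2+\varepsilon}X^{-1}$ we have $R^{2}/(xX)<R^{-\varepsilon}$, so letting $\sigma\to\infty$—that is, pushing the contour far to the right, which is legitimate by the rapid decay in $\tau$—forces $\Phi(x)\ll X^{-A}$ for any $A$. For the small range $xX\ll1$ I would instead evaluate \eqref{integral} on the fixed line $\sigma=-\tfrac12$, where the gamma ratio is bounded; splitting the $\tau$-integral and feeding in the decay of $\widetilde{\phi}$ gives convergence at the cost of an $R^{\varepsilon}$, and hence $\Phi(x)\ll(xX)^{1/2}R^{\varepsilon}$.

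The delicate case is $X^{-1}\ll x\le R^{2+\varepsilon}X^{-1}$, where the master bound is only of trivial size and genuine oscillation must be extracted. Here I would invoke Lemma \ref{in}, which rewrites $\Phi(x)$ as a finite sum of integrals $x\int_0^\infty(xy)^{-j/2-1/4}\phi(y)\,e(\pm2\sqrt{xy})\,\mathrm{d}y$ plus a tail $O(x^{1/4-J/2})$ negligible for large $J$. A trivial estimate of the leading $j=0$ term gives only $(xX)^{3/4}$, so the crux is to integrate by parts once against the phase $\psi(y)=2\sqrt{xy}$: since $\psi'(y)=\sqrt{x/y}$, the ratio of the amplitude to $\psi'(y)$ collapses to $x^{-3/4}y^{1/4}\phi(y)$, and differentiating this and integrating over the support of $\phi$ saves a clean factor $(xX)^{-1/2}$, leaving $(xX)^{1/4}$. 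The $j$-th term is handled identically and contributes $(xX)^{1/4-j/2}$, so the sum over $j$ is dominated by $j=0$ and the whole range yields $\Phi(x)\ll(xX)^{1/4}$.

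I expect the bookkeeping in this last step to be the main obstacle. One must check that a single integration by parts is optimal: a further one would multiply by $R/\sqrt{xX}$, which on the range $xX\le R^{2+\varepsilon}$ under consideration is $\ge R^{-\varepsilon}$ and hence counterproductive, so the threshold $xX\sim R^{2}$ separating oscillatory gain from rapid decay is exactly the cutoff recorded in the statement. It remains to verify that the derivative bounds on $\phi$ pass cleanly through each differentiation and that the sum over $j$, together with the negligible tail of Lemma \ref{in}, introduces no loss, which completes the argument.
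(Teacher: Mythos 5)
Your proposal is correct and follows essentially the same route as the paper: the master bound $\Phi(x)\ll R^{1+\varepsilon}\bigl(R^2/(xX)\bigr)^{\sigma}$ from repeated integration by parts on $\widetilde{\phi}$ plus Stirling (with $\sigma$ taken large for $x>R^{2+\varepsilon}X^{-1}$ and $\sigma=-\tfrac12$ for $xX\ll1$), and Lemma \ref{in} with a single integration by parts against the phase $e(\pm2\sqrt{xy})$ to get $(xX)^{1/4}$ in the intermediate range. Your additional observation that a second integration by parts costs $R/\sqrt{xX}\ge R^{-\varepsilon}$ on that range correctly explains why the cutoff sits at $xX\sim R^{2}$, a point the paper leaves implicit.
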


\begin{lemma}[Hua's lemma]\label{Hua's lemma}
	Suppose that $1\le j \le k$. Then
	\bna
	\int_{0}^{1}\Big|\sum_{1\le m\le N}e(\alpha m^k) \Big|^{2^j}d\alpha
    \ll N^{2^j-j+\varepsilon},
	\ena
where the implied constant depending on $ k$ and $ \varepsilon$.
    \begin{proof}
    See \cite[Lemma 2.5]{V} or  \cite[Lemma 20.6]{IK}.
    \end{proof}
\end{lemma}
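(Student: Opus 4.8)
The plan is to prove this by induction on $j$, using Weyl's differencing to lower the degree of the polynomial in the exponent one unit at a time. It is cleanest to prove a slightly more general statement that is stable under differencing: for every integer $j\geq1$, every polynomial $Q\in\mathbb{Z}[x]$ of degree $d\geq j$ with nonzero leading coefficient, and every real polynomial $L(x)$,
\bna
\int_0^1\Big|\sum_{x\in I}e\big(\beta Q(x)+L(x)\big)\Big|^{2^j}\,\mathrm{d}\beta\ll_{d,j,\varepsilon}|I|^{2^j-j+\varepsilon},
\ena
uniformly in the interval $I$ (with $|I|\leq N$) and in the coefficients of $L$. Hua's lemma is the case $Q(x)=x^k$, $L\equiv0$, $d=k$, $I=[1,N]$, which is admissible precisely when $1\leq j\leq k$.

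For the base case $j=1$ I would expand the square and integrate in $\beta$; orthogonality collapses the integral to the number of pairs $(x,x')\in I^2$ with $Q(x)=Q(x')$, the phase $e(L(x)-L(x'))$ being harmless under the absolute value. Since $Q$ has positive degree, each value is attained $\ll_d 1$ times, so this count is $\ll|I|=|I|^{2-1}$, uniformly in $L$. This is exactly where the hypothesis $j\leq k$ enters: in the general statement it guarantees that after all the differencings the polynomial still has positive degree, so this final orthogonality genuinely detects the leading variable.

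For the inductive step I would write the $2^j$-th moment as the $2^{j-1}$-th moment of $|F(\beta)|^2$, where $F(\beta)=\sum_{x\in I}e(\beta Q(x)+L(x))$, and insert the differencing identity
\bna
|F(\beta)|^2=|I|+\sum_{0<|h|<|I|}\sum_{y}e\big(\beta\,Q_h(y)+L_h(y)\big),\qquad Q_h(y)=Q(y+h)-Q(y),
\ena
in which $Q_h$ has degree $d-1$ and nonzero leading coefficient for $h\neq0$. Separating the diagonal term $|I|$, whose $2^{j-1}$-th moment contributes an acceptable $|I|^{2^{j-1}}\leq|I|^{2^j-j}$ (using $j\leq 2^{j-1}$), reduces matters to bounding the $2^{j-1}$-th moment of the off-diagonal sum by $|I|^{2^j-j+\varepsilon}$ through the inductive hypothesis applied to the degree-$(d-1)$ polynomials $Q_h$.

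The main obstacle is precisely this final bookkeeping: one must lose no power of $|I|$ in descending from level $j$ to level $j-1$, since the target recursion is $a_j=a_{j-1}+2^{j-1}-1$ with $a_j=2^j-j$. A crude application of Hölder's inequality over the difference parameter $h$ costs an extra factor $|I|$ (it replaces a single reduced moment by a sum of $\asymp|I|$ of them) and is too lossy; indeed for $j\geq3$ the off-diagonal already dominates the diagonal. The disciplined way to avoid the loss is to read the moment as a solution count, introduce the differences $h_i=x_i-x_i'$ simultaneously in all $2^{j-1}$ coordinate pairs so that the equation becomes $\sum_i Q_{h_i}(y_i)=0$, and then organize the estimate by the number of nonzero $h_i$, the genuinely delicate cross-terms being those with several $h_i\neq0$. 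This is the classical argument of Hua, and for the exact constants I would follow \cite[Lemma 2.5]{V} or \cite[Lemma 20.6]{IK}.
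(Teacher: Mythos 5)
Your skeleton (induction on $j$, base case by orthogonality, Weyl differencing to drop the degree) points in the right direction, and you correctly diagnose that H\"older over the shift $h$ loses a power of $N$; but note first that the paper offers no argument at all for this lemma---its ``proof'' is the citation to Vaughan and Iwaniec--Kowalski---so the real comparison is with the classical proof in those references. Measured against that proof, your inductive step has a genuine gap, and it is not one of ``exact constants.'' You difference \emph{symmetrically}: substituting $x_i = y_i + h_i$ in all $2^{j-1}$ coordinate pairs and counting solutions of $\sum_i Q_{h_i}(y_i)=0$ by the number of nonzero $h_i$. The only arithmetic leverage this produces is $h_i \mid Q_{h_i}(y_i)$ \emph{separately} in each coordinate, and that is provably insufficient. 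Writing $C_R$ for the number of solutions with all $R$ shifts nonzero, the divisor bound applied to the pair $(h_1,y_1)$ given $m=-\sum_{i\geq 2}Q_{h_i}(y_i)$ yields the recursion $C_R \ll N^{2(R-1)+\varepsilon} + N\,C_{R-1}$ ($m\neq 0$ and $m=0$ cases respectively), hence $C_R \ll N^{2R-2+\varepsilon}$; but the term with all $h_i\neq 0$ needs $C_{2^{j-1}} \ll N^{2^j-j+\varepsilon}$, so you are short by $N^{j-2}$: fine for $j=2$, fatal for every $j\geq 3$ --- exactly the range this paper uses ($\ell\geq 3$). The shortfall is robust: any pairing of your inductive hypothesis with H\"older or trivial bounds (e.g.\ expanding $\int_0^1 D^{2^{j-1}}\mathrm{d}\beta$ via $D=|F|^2-|I|$ into lower moments and estimating $\int|F|^{2s}\leq N^{2s-2^{j-1}}\int|F|^{2^{j-1}}$) again lands at $N^{2^j-j+1+\varepsilon}$, one power too big, because the level-$j$ bound is strictly stronger information than the level-$(j-1)$ bound plus anything elementary.

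What the classical proof actually does --- and what you would find on opening \cite[Lemma 2.5]{V} or \cite[Lemma 20.6]{IK} --- is asymmetric and \emph{nested}: Weyl differencing is applied $j-1$ times to a single copy of $|F(\alpha)|^{2^{j-1}}$, while the other copy is left intact, so orthogonality counts solutions of $\Delta_{h_1}\cdots\Delta_{h_{j-1}}(y^k) = \sum_i\bigl(z_i^k - x_i^k\bigr) =: m$. The nested difference factors as $h_1\cdots h_{j-1}\,p(y;h_1,\dots,h_{j-1})$ with $p$ of degree $k-j+1\geq 1$ in $y$, so for $m\neq 0$ the divisor bound pins down \emph{all $j$ variables} $(h_1,\dots,h_{j-1},y)$ to $O(N^{\varepsilon})$ choices --- that single step is the entire $j$-power saving --- while the $m=0$ term is where the inductive hypothesis $\int_0^1|F|^{2^{j-1}}\mathrm{d}\alpha \ll N^{2^{j-1}-(j-1)+\varepsilon}$ enters, multiplied by the $\ll N^{j-1}$ solutions of $\Delta_{\mathbf{h}}(y^k)=0$. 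Your one-differencing-per-level induction never creates this nested multiplicative structure, so deferring the cross-terms to the references would not complete your argument; it would replace it.
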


\section{Proof of Theorem \ref{Th11}}
We will prove Theorem \ref{Th11}  by the classical circle method. For any $\alpha\in\mathbb{R}$, we define
    \bea\label{FG}
    F(\alpha)=\sum_{|m|\leq X^{1/2}}e(\alpha m^2),\qquad
    G(\alpha)=\sum_{n\geq 1}\lambda_f(n)\chi(n)e(-\alpha n)w\left(\frac{n}{X}\right).
    \eea
Then by the orthogonality relation
    \begin{center}	
	$\int_{0}^{1} e(n\alpha)d\alpha=\begin{cases}
	1,&\text{if } n=0,\\
	0,&\text{if } n\in \mathbb{Z}\backslash \{0\},
	
    \end{cases}$
    \end{center}
one has
    \bna
    \mathcal{S}(X)
    &=&\sum_{n\geq1}\lambda_f(n)\chi(n)\phi\left(\frac{n}{X}\right)
    \mathop{\sum_{|m_1|\leq X^{1/2}}\cdots\sum_{|m_{\ell}|\leq X^{1/2}}}_{m=m_1^2+\cdots+m_{\ell}^2}1\\
    &=&\sum_{n\geq1}\lambda_f(n)\chi(n)\phi\left(\frac{n}{X}\right)
    \sum_{|m_1|\leq X^{1/2}}\cdots\sum_{|m_{\ell}|\leq X^{1/2}}
    \int_{0}^{1}e((m_1^2+\cdots+m_{\ell}^2-n)\alpha)\mathrm{d}\alpha\\
    &=&\int_0^1 F^{\ell}(\alpha)G(\alpha)\mathrm{d}\alpha.
    \ena
Let $P$ and $Q$ be two parameters satisfying $P<Q\leq X$. Note that $ F^{\ell}(\alpha)G(\alpha)$ is a periodic function of period 1. One further has
    \bna
    \mathcal{S}(X)=\int_{\frac{1}{Q}}^{1+\frac{1}{Q}}F^{\ell}(\alpha)G(\alpha)\mathrm{d}\alpha.
    \ena
By Dirichlet's lemma on rational approximation, each $\alpha\in\left[1/Q,1+1/Q\right]$ can be written in the form
    \bea\label{dirichlet}
    \alpha=\frac{a}{q}+\beta,\qquad |\beta|\le\frac{1}{qQ},
    \eea
where $a$, $q$ are some integers satisfying $1\le a\le q\le Q$, $(a,q)=1$.
We denote by $ \mathfrak{M}(a,q)$ the set of $\alpha$ satisfying \eqref{dirichlet} and define the major arcs $ \mathfrak{M}$ and the minor arcs $\mathfrak{m}$ as follows:
    \[
    \mathfrak{M}=\bigcup_{1\le q\le P}\bigcup_{1\le a\le q\atop (a,q)=1} \mathfrak{M}(a,q),\qquad \mathfrak{m}=\Big[1/Q,1+1/Q\Big] \backslash \mathfrak{M}.
    \]
Then we have
    \bea\label{deposition 1}
    \mathcal{S}(X)=\int_\mathfrak{M}F^{\ell}(\alpha)G(\alpha) \mathrm{d}\alpha+\int_\mathfrak{m}F^{\ell}(\alpha)G(\alpha) \mathrm{d}\alpha.
    \eea

In view of \eqref{deposition 1}, our problem is reduced to evaluating the integrals on the major and  minor arcs.

\subsection{The minor arcs}
The treatment of the minor arcs is primarily based on the following proposition.
    \begin{proposition}\label{minor}
    Let $F(\alpha)$ and $G(\alpha)$ be defined as in \eqref{FG}, and $\alpha\in\mathfrak{m}$. We have
    \bna
    \int_\mathfrak{m}F^{\ell}(\alpha)G(\alpha) \mathrm{d}\alpha\ll X^{\frac{\ell}{2}+\varepsilon}P^{\frac{2-\ell}{2}}
    +X^{\frac{\ell+2}{4}+\varepsilon}+X^{1+\varepsilon}Q^{\frac{\ell-2}{2}},
    \ena
    where the implied constant depends on $f$, $\ell$ and $\varepsilon$.
    \end{proposition}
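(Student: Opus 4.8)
The plan is to peel off two copies of $F$ from the $\ell$-th power, pair them with $G$ through a second–moment (Cauchy–Schwarz) estimate, and control the remaining $\ell-2$ copies by a uniform pointwise bound for the quadratic Weyl sum. Since $\ell\ge 3$, we have $\ell-2\ge 1$, so it is legitimate to write
\[
\int_{\mathfrak m}F^{\ell}(\alpha)G(\alpha)\,\mathrm{d}\alpha
\ll\Big(\sup_{\alpha\in\mathfrak m}|F(\alpha)|\Big)^{\ell-2}\int_{0}^{1}|F(\alpha)|^{2}|G(\alpha)|\,\mathrm{d}\alpha,
\]
keeping exactly two copies of $F$ for an $L^2$ treatment while the rest are taken in sup-norm over $\mathfrak m$.

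For the integral factor I would apply Cauchy–Schwarz to decouple $F$ and $G$,
\[
\int_{0}^{1}|F|^{2}|G|\,\mathrm{d}\alpha
\le\Big(\int_{0}^{1}|F(\alpha)|^{4}\,\mathrm{d}\alpha\Big)^{1/2}
\Big(\int_{0}^{1}|G(\alpha)|^{2}\,\mathrm{d}\alpha\Big)^{1/2}.
\]
The first factor is Hua's lemma (Lemma \ref{Hua's lemma}) with $k=2$, $j=2$ and $N=X^{1/2}$, which gives $\int_{0}^{1}|F|^{4}\,\mathrm{d}\alpha\ll X^{1+\varepsilon}$ (the contribution of the terms $m=0,\pm m$ is harmless by symmetry). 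The second factor is Parseval's identity for the Fourier series defining $G$, followed by $|\chi(n)|\le1$ and the Rankin–Selberg bound \eqref{CSRS}: $\int_{0}^{1}|G|^{2}\,\mathrm{d}\alpha=\sum_{n}|\lambda_f(n)\chi(n)|^{2}w(n/X)^{2}\ll X^{1+\varepsilon}$. Hence $\int_{0}^{1}|F|^{2}|G|\,\mathrm{d}\alpha\ll X^{1+\varepsilon}$, uniformly and with no dependence on $\Delta$ (consistent with $\Delta$ being absent from the proposition).

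For the supremum factor I would invoke the classical Weyl estimate for quadratic exponential sums: for $\alpha=a/q+\beta$ with $(a,q)=1$ and $|\beta|\le q^{-2}$ one has $F(\alpha)\ll X^{\varepsilon}\big(X^{1/2}q^{-1/2}+X^{1/4}+q^{1/2}\big)$. The key point is that on the minor arcs we have $P<q\le Q$ together with $|\beta|\le 1/(qQ)\le q^{-2}$ (using $q\le Q$), so the hypothesis is met; the three regimes then give
\[
\sup_{\alpha\in\mathfrak m}|F(\alpha)|\ll X^{\varepsilon}\big(X^{1/2}P^{-1/2}+X^{1/4}+Q^{1/2}\big).
\]
Raising to the power $\ell-2$, using $(a+b+c)^{\ell-2}\ll a^{\ell-2}+b^{\ell-2}+c^{\ell-2}$, and multiplying by the factor $X^{1+\varepsilon}$ from the previous paragraph reproduces precisely the three claimed terms $X^{\ell/2+\varepsilon}P^{(2-\ell)/2}$, $X^{(\ell+2)/4+\varepsilon}$ and $X^{1+\varepsilon}Q^{(\ell-2)/2}$, with the middle term originating from the $X^{1/4}$ in the Weyl bound.

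The argument is an assembly of standard tools rather than a source of a deep obstacle: the two moment bounds are immediate from the quoted Hua and Rankin–Selberg estimates. The only step requiring genuine care is the pointwise Weyl estimate — in particular verifying that the Dirichlet approximation on $\mathfrak m$ indeed forces $|\beta|\le q^{-2}$ so that the inequality applies — and the bookkeeping that matches its three regimes $q>P$, the fixed $X^{1/4}$ term, and $q\le Q$ to the three terms in the proposition.
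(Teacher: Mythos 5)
Your proposal is correct and follows essentially the same route as the paper: extract $\sup_{\mathfrak m}|F|^{\ell-2}$, apply Cauchy--Schwarz to split $\int|F|^2|G|$ into Hua's fourth-moment bound and the Parseval/Rankin--Selberg second moment of $G$, and bound the supremum by Weyl's inequality using $P<q\le Q$ on the minor arcs. The only difference is that you make explicit the verification that $|\beta|\le 1/(qQ)\le q^{-2}$ so Weyl's inequality applies, a point the paper leaves implicit.
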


    \begin{proof}
    By Cauchy-Schwarz inequality, one has
    \bna\label{eq:minor}
    \int_\mathfrak{m}F^{\ell}(\alpha)G(\alpha) \mathrm{d}\alpha
    \ll\sup_{\alpha \in \mathfrak{m}} |F(\alpha)|^{\ell-2}
    \Big(\int_{0}^{1} | F(\alpha) |^4d\alpha\Big)^\frac{1}{2}
    \Big(\int_{0}^{1} |G(\alpha )|^2d\alpha \Big)^\frac{1}{2}.
    \ena
By Weyl's inequality (see \cite[Lemma 2.4]{V}), for $\alpha\in\mathfrak{m}$, we have
    \bna\label{Wely's inequality}
    F(\alpha)
    \ll X^{\frac{1}{2}+\varepsilon}\left(q^{-1}+X^{-\frac{1}{2} }+qX^{-1}\right)^{\frac{1}{2}}
    \ll X^{\frac{1}{2}+\varepsilon}P^{-\frac{1}{2} }+X^{\frac{1}{4}+\varepsilon}+Q^{\frac{1}{2}}.
    \ena
    By Lemma \ref{Hua's lemma}, we have
    \bna\label{Hua}
    \int_{0}^{1}|F(\alpha )|^4 \mathrm{d}\alpha \ll X^{1+\varepsilon}.
    \ena
From Rankin-Selberg estimates \eqref{CSRS}, we deduce that
    \bna
    \int_{0}^{1} |G(\alpha )|^2d\alpha \ll \sum_{n\ll X}|\lambda_f(n)|^{2}\ll X.
    \ena
Combining all estimates above, we complete the proof.\\
\end{proof}

\subsection{The major arcs}
The treatment of the major arcs requires recourse to the Voronoi summation formula, which plays a vital role in the proof of the following proposition.
\begin{proposition}\label{major}
 Let $F(\alpha)$ and $G(\alpha)$ be defined as in \eqref{FG}, and $\alpha\in\mathfrak{M}$. We have
    \bna
    \int_\mathfrak{M}F^{\ell}(\alpha)G(\alpha) \mathrm{d}\alpha
    \ll p\left(1+\frac{X}{PQ}\right)
    \left(\frac{X^{1/2+\ell/2+\varepsilon}P}{Q^{3/2}}
    +\frac{X^{\ell/2+\varepsilon}P^{3/2}\Delta^{1/2}}{Q}\right),
    \ena
 where the implied constant depends on  $f$, $\ell$ and $\varepsilon$.
 \end{proposition}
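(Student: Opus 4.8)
The plan is to parametrize the major arcs and to treat the two factors $F$ and $G$ by entirely different devices: the theta factor $F$ by Poisson summation, and the automorphic-cum-character factor $G$ by Gauss-sum separation of $\chi$ followed by the $\mathrm{GL}_2$ Voronoi formula of Lemma \ref{Voro}. Writing $\alpha=a/q+\beta$ as in \eqref{dirichlet}, I would first expand
\[
\int_\mathfrak{M}F^{\ell}(\alpha)G(\alpha)\,\mathrm{d}\alpha
=\sum_{q\le P}\ \sideset{}{^*}\sum_{a\bmod q}\ \int_{|\beta|\le 1/(qQ)}F^{\ell}\!\left(\tfrac aq+\beta\right)G\!\left(\tfrac aq+\beta\right)\mathrm{d}\beta,
\]
and, since $q\le P<X$ and $p$ is prime, treat the generic case $(q,p)=1$ (the few $q$ divisible by $p$ being handled separately).

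For $F$, splitting $m$ into residues modulo $q$ gives $F(a/q+\beta)=\sum_{u\bmod q}e(au^2/q)\sum_{m\equiv u\,(q)}e(\beta m^2)$, and Poisson summation on the inner progression produces the main term $q^{-1}C(a,q)\,V(\beta)$, where $C(a,q)=\sum_{u\bmod q}e(au^2/q)$ is a quadratic Gauss sum of size $\sqrt q$ and $V(\beta)=\int_{|t|\le X^{1/2}}e(\beta t^2)\,\mathrm{d}t\ll\min\{X^{1/2},|\beta|^{-1/2}\}$; the nonzero frequencies are controlled by stationary phase using $|\beta|q\le 1/Q$. Raising to the $\ell$-th power yields $F^\ell\approx q^{-\ell}C(a,q)^\ell V(\beta)^\ell$. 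For $G$, I would open the character by $\chi(n)=\tau(\bar\chi)^{-1}\sum_{b\bmod p}\bar\chi(b)e(bn/p)$ and combine the additive characters $b/p-a/q=(bq-ap)/(pq)$, so that, with $\phi(x)=w(x/X)e(-\beta x)$, the inner sum becomes $\sum_n\lambda_f(n)e((bq-ap)n/(pq))\phi(n)$. Applying Lemma \ref{Voro} with modulus $pq$ dualizes this into $pq\sum_n n^{-1}\lambda_f(n)e(-\overline{(bq-ap)}n/(pq))\Phi(n/(pq)^2)$, and Lemma \ref{evaluation of G}, applied with smoothness parameter $R=\max\{\Delta,|\beta|X\}$, shows that $\Phi(n/(pq)^2)$ is negligible unless $n\ll (pq)^2R^{2+\varepsilon}/X$, where it is of size $(nX)^{1/4}/(pq)^{1/2}$. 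This both shortens the dual sum and exposes the oscillation $e(-\overline{(bq-ap)}n/(pq))$ that drives the cancellation.

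The crux is the resulting arithmetic sum. By the Chinese Remainder Theorem the inverse $\overline{(bq-ap)}$ modulo $pq$ splits into $\overline{bq}\pmod p$ and $-\overline{ap}\pmod q$, so after inserting $F^\ell$ the $a$-sum becomes a complete sum $\sideset{}{^*}\sum_{a\bmod q}C(a,q)^\ell e(\overline{ap}\,n/q)$ of Gauss/Salié type, while the $b$-sum becomes a $\bar\chi$-twisted additive sum $\sum_{b\bmod p}\bar\chi(b)e(\overline{bq}\,n/p)$, which is a Gauss sum of modulus $\sqrt p$ (and vanishes when $p\mid n$, since $\chi$ is nonprincipal). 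Evaluating and bounding these complete sums, reassembling with the $\Phi$-bound and the factor $\tau(\bar\chi)^{-1}\ll p^{-1/2}$, carrying out the $\beta$-integral via $\int|V(\beta)|^\ell\mathrm{d}\beta\ll X^{\ell/2-1}$ for $\ell\ge3$, and finally summing over $n\ll (pq)^2R^2/X$ and over $q\le P$, should collect to the stated bound; here the two summands reflect the two regimes $|\beta|X\le\Delta$ and $|\beta|X>\Delta$ in $R=\max\{\Delta,|\beta|X\}$, the oscillatory part reaching $\sim X/(PQ)$ on the longest arcs and producing the factor $1+X/(PQ)$. I expect the main obstacle to be exactly this arithmetic step—pinning down the complete exponential sums over $a\bmod q$ and $b\bmod p$ and extracting square-root cancellation uniformly in $n$—together with the bookkeeping required to keep the Poisson and Voronoi error terms below the main term across the interlocking ranges of $\beta$, $n$, and $q$.
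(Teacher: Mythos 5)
Your treatment of $G$ is exactly the paper's: open $\chi$ via $\tau(\bar\chi)^{-1}\sum_b\bar\chi(b)e(bn/p)$, combine the additive characters into $e((bq-ap)n/(pq))$, apply Lemma \ref{Voro} with modulus $pq$, and control $\Phi_\beta$ by Lemma \ref{evaluation of G} with $R\asymp\Delta+|\beta|X$ (note you should also carry the extra factor $1+|\beta|X$ coming from $\int|\phi_\beta^{(j)}|$, which is precisely where the $1+X/(PQ)$ in the statement originates). Likewise your CRT splitting of $\overline{bq-ap}$ and the identification of the $b$-sum as $\bar\chi(-n\overline{q^2})\tau(\chi)$ match the paper. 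Where you genuinely diverge is the theta factor: the paper never touches $F^\ell$ analytically. It simply keeps the $\ell$ sums over $m_1,\dots,m_\ell\ll X^{1/2}$ outside the absolute value (a trivial $X^{\ell/2}$), and the only cancellation extracted from the $a$-variable is that $\sideset{}{^*}\sum_{a\bmod q}e((m_1^2+\cdots+m_\ell^2)a/q)e(n\overline{ap^2}/q)=S(m_1^2+\cdots+m_\ell^2,n\overline{p^2};q)$ is a Kloosterman sum, bounded by Weil as $\ll q^{1/2+\varepsilon}(n,q)^{1/2}$. That single square-root saving in $q$, together with the trivial $\beta$-integration over $|\beta|\le 1/(qQ)$, already yields the stated bound.

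The gap in your route is the Poisson step for $F$. For the sharp-cutoff sum $\sum_{|m|\le X^{1/2}}e((a/q+\beta)m^2)$ the standard expansion is $q^{-1}C(a,q)V(\beta)+O\bigl(q^{1/2+\varepsilon}(1+X|\beta|)^{1/2}\bigr)$, and on the edge of a major arc with $q\asymp P$ and $|\beta|\asymp 1/(qQ)$ the error $\asymp(qX/Q)^{1/2}$ is of the same order as the main term $q^{-1/2}|V(\beta)|\asymp Q^{1/2}$ once $PQ\asymp X$. So $F^\ell\approx q^{-\ell}C(a,q)^\ell V(\beta)^\ell$ is not a uniformly valid approximation, and the cross and error terms in $(M+E)^\ell G$ destroy the complete $a$-sum structure, so for them you can no longer average $G$ over $a\bmod q$ and must bound $\sum_a|G(a/q+\beta)|$ with a loss of $q^{1/2}$ relative to the Kloosterman saving; for general admissible $P<Q\le X$ this contribution is not dominated by the stated bound. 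You would also need a Weil-quality bound for the Sali\'e-type sums $\sideset{}{^*}\sum_a C(a,q)^\ell e(n\overline{ap^2}/q)$ uniformly in $n$ and for even $q$. None of this is fatal — with the final parameter choice $Q\asymp p^{2/(\ell+3)}X^{(\ell+1)/(\ell+3)}\Delta^{1/(\ell+3)}$ your main term is in fact \emph{smaller} than the paper's — but as written the plan does not deliver the proposition as stated, and it expends considerable effort (Poisson, stationary phase, Sali\'e sums, $\int|V|^\ell\ll X^{\ell/2-1}$) on cancellation that the stated bound does not require.
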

 \begin{proof}
  On the major arcs, for any $\alpha\in \mathfrak{M}$, we have
    \bea\label{major arcs}
    \int_\mathfrak{M}F^{\ell}(\alpha)G(\alpha)d\alpha
    &=&\sum_{q\le P}\,\,\sideset{}{^*}\sum_{a \bmod q}\int_{\mathfrak{M} (a,q)}F^{\ell}(\alpha)G(\alpha)d\alpha\nonumber\\
    &=&\sum_{q\le P} \,\int_{|\beta |\le \frac{1}{qQ}}\,\sideset{}{^*}\sum_{a \bmod q}F^{\ell}\Big(\frac{a}{q}+\beta\Big)G\Big(\frac{a}{q}+\beta\Big)d\beta,
    \eea
where, throughout the paper the $*$ denotes the condition $(a,q)=1$.
By inserting the definitions of $F(\alpha) $ and $G(\alpha) $ into \eqref{major arcs} and exchanging the order of  the summation over $a$ and the integration over $\beta$, the integration over major arcs is bounded by
    \bea\label{major arcs2}
    &&\sum_{q\leq P}\,\,\int\limits_{|\beta|\leq\frac{1}{qQ}}\sum_{m_1,\cdots,m_{\ell}\ll  X^{1/2}}\Big|\,\,\sideset{}{^*}\sum_{a \bmod q}e\left(\frac{(m_1^2+\cdots+m_{\ell}^2)a}{q}\right)\sum_{n\geq 1}\lambda_f(n)\chi(n)e\left(-\frac{na}{q}\right)\phi_\beta(n)\Big|\mathrm{d}\beta,\nonumber\\
    \eea
where $\phi_\beta(x)=e(-x\beta)w\left(x/X\right)$. By the Fourier expansion of $\chi$ in the terms of additive characters (see \cite[(3.12)]{IK})
    \bna
    \chi(n)=\frac{1}{\tau(\overline{\chi})}\sum_{b \bmod p}\overline{\chi}(b)e\left(\frac{b n}{p}\right),
    \ena
we can transfer the $n$-sum in \eqref{major arcs2} into
    \bna\label{before Voeonoi}
    \sum_{n\geq 1}\lambda_f(n)\chi(n)e\left(-\frac{na}{q}\right)\phi_\beta(n)
    =\frac{1}{\tau(\overline{\chi})}\sum_{b \bmod p}\overline{\chi}(b)\sum_{n\geq 1}\lambda_f(n)e\left(\frac{cn}{pq}\right)\phi_\beta(n),
    \ena
where $c=bq-ap$. Assume $p $ is enough large prime and $ q<P<p$, it is easy to see that $(c, pq) = 1$. By applying Lemma \ref{Voro}, the summation in the absolute value  symbol of \eqref{major arcs2} is equal to
    \bea\label{summation}
    \mathbf{T}:=\frac{pq}{\tau(\overline{\chi})}\sum_{n=1}^{\infty}\frac{\lambda_f(n)}{n}\mathcal{C}(n,m_1^2+\cdots+m_{\ell}^2,q)\Phi_\beta\left(\frac{n}{p^2q^2}\right),
    \eea
where the character sum $\mathcal{C}:=\mathcal{C}(n,m_1^2+\cdots+m_{\ell}^2,q)$ is given by
    \bna
    \mathcal{C}= \sideset{}{^*}\sum_{a \bmod q}
    e\left(\frac{(m_1^2+\cdots+m_{\ell}^2)a}{q}\right)
    \sum_{b \bmod p}\overline{\chi}(b)e\left(-\frac{\overline{c}n }{pq}\right)
    \ena
and
    \bna
    \Phi_\beta(x)= i^{\kappa-1}\frac{1}{2\pi^2}\int_{(\sigma)}(\pi^2 x)^{-s}\rho_f (s)\widetilde{\phi}_\beta(-s)\mathrm{d}s.
    \\\ena

We first estimate the character sum $\mathcal{C}$. Since $(p,q)=1$, we compute
    \bna
    \mathcal{C}
    &=&\sideset{}{^*}\sum_{a \bmod q}e\left(\frac{(m_1^2+\cdots+m_{\ell}^2)a}{q}\right)\sum_{b \bmod p}\overline{\chi}(b)e\left(-\frac{n \overline{ bq^2}}{p}+\frac{n \overline{ ap^2}}{q}\right)\\
    &=&S(m_1^2+\cdots+m_{\ell}^2, n\overline{p^2};q)\sum_{b \bmod p}\overline{\chi}(b)e\left(-\frac{n \overline{ bq^2}}{p}\right)\\
    &=&S(m_1^2+\cdots+m_{\ell}^2, n\overline{p^2};q)\overline{\chi}(-n\overline{ q^2})\tau(\chi).
    \ena
By the estimate  $|\tau(\chi)|=\sqrt{p}$ and Weil's bound for Kloosterman sums, one has
    \bea\label{eq-charractersum}
    \mathcal{C}
    &\ll& (m_1^2+\cdots+m_{\ell}^2,n,q)^{\frac{1}{2}}
    q^{\frac{1}{2}}\tau(q)p^{\frac{1}{2}}\nonumber\\
    &\ll& p^{\frac{1}{2}}q^{\frac{1}{2}+\varepsilon}(n,q)^{\frac{1}{2}}.
    \eea

Next we estimate the integral $\Phi_\beta(x)$. Recall that $\phi_\beta(x)=e(-x\beta)w\left(\frac{x}{X}\right)$,
where $w(x)$ is a smooth function compactly supported in $[1/2, 1]$ and satisfying
    \bna
    w^{(j)}(x)\ll_j \Delta^j \quad\text{ for}\quad j\geq 0 \quad
    \text{and} \quad
    \int|w^{(j)}(\xi)|\mathrm{d}\xi\ll \Delta^{j-1} \quad \text{for}\quad j\geq1.
    \ena
Thus we have
    \bna
    \phi_\beta^{(j)}(x)\ll\left(\frac{\Delta+|\beta|X}{X}\right)^j
    \ena
for any $j\geq0$  and
    \bna
    \int_{0}^{\infty}|\phi_\beta^{(j)}(x)|\mathrm{d}x
    &=&\int_{0}^{\infty}\left|\sum_{0\leq v\leq j}C_{j}^{v}(-2\pi i\beta)^{j-v}e(-\beta x)w^{(v)}\left(\frac{x}{X}\right)\frac{1}{X^v}\right|\mathrm{d}x\\
    &\ll&|\beta|^jX+\sum_{1\leq v\leq j}C_{j}^{v}|\beta|^{j-v}X^{-v+1}
    \int_{0}^{\infty}|w^{(v)}(\xi)|\mathrm{d}\xi\\
    &\ll&|\beta|^{j}X+|\beta|^{j-1}\sum_{1\leq v\leq j}\left(\frac{\Delta}{|\beta|X}\right)^{v-1}\\
    &\ll&|\beta|^{j-1}+|\beta|^{j}X+\left(\frac{\Delta}{X}\right)^{j-1}\\
    &\ll&(1+|\beta|X)\left(\frac{|\beta|X+\Delta}{X}\right)^{j-1}
    \ena
for any $j\geq 1$. It follows from Lemma \ref{evaluation of G} that
    \bea\label{eq:integral}
    \Phi_{\beta}(x)=
    \left\{
    \begin{array}{ll}
    X^{-A} , & \text{if }\, x>(\Delta+|\beta|X)^{2+\varepsilon}X^{-1} ,\\
    (1+|\beta|X)(xX)^{\frac{1}{4}},
    & \text{if }\, X^{-1}\ll x \leq (|\beta|X+\Delta)^{2+\varepsilon}X^{-1} ,\\
    (1+|\beta|X)(xX)^{\frac{1}{2}}( \Delta+|\beta|X)^{\varepsilon},
    & \text{if }\, x \ll X^{-1}.
    \end{array} \right.
    \eea

Plugging the estimate \eqref{eq-charractersum} into \eqref{summation}, we obtain
    \bea\label{tt}
    \mathbf{T}
    \ll\frac{p^{\frac{3}{2}}q^{\frac{3}{2}+\varepsilon}}{\tau(\overline{\chi})}
    \sum_{n=1}^{\infty}\frac{\lambda_f(n)(n,q)^{\frac{1}{2}}}{n}\Phi_\beta\left(\frac{n}{p^2q^2}\right).
    \eea
In view of the Ramanujan conjecture \eqref{eq:KS} and the Rankin-Selberg estimates \eqref{CSRS}, one has
    \bna
    \sum_{n\leq x}|\lambda_f(n)|(n,q)^{\frac{1}{2}}
    &\leq& \sum_{l|q}l^{\frac{1}{2}}\sum_{n\leq x/l}|\lambda_f(nl)|\\
    &\leq& \sum_{l|q}l^{\frac{1}{2}}\sum_{r|l^\infty}\sum_{n\leq x/(rl)\atop (n,r)=1}|\lambda_f(rnl)|\\
    &\leq& \sum_{l|q}l^{\frac{1}{2}}\sum_{r|l^\infty}| \lambda_f(rl)|\sum_{n\leq x/(rl)}|\lambda_f(n)|\\
    &\leq& x^{1+\varepsilon}.
    \ena
By virtue of \eqref{eq:integral},  the summation of \eqref{tt} is bounded by
    \bna\label{after integral}
    &\ll&(1+|\beta|X)X^\varepsilon\sum_{n=1}^{\infty}\frac{|\lambda_f(n)|(n,q)^{\frac{1}{2}}}{n}
    \max_{X^{-1}\ll \frac{n}{p^2q^2} \leq (|\beta|X+\Delta)^{2+\varepsilon}X^{-1} } \left(\frac{nX}{p^2q^2}\right)^{\frac{1}{4}}\nonumber\\
    &&+(1+|\beta|X)X^\varepsilon
    \sum_{n=1}^{\infty}\frac{|\lambda_f(n)|(n,q)^{\frac{1}{2}}}{n}
    \max_{\frac{n}{p^2q^2} \ll X^{-1} }
    \left(\frac{nX}{p^2q^2}\right)^{\frac{1}{2}}\nonumber\\
    &\ll& (1+|\beta|X)X^\varepsilon
    \max_{ \frac{p^2q^2}{X}\ll N\ll\frac{(pq(|\beta|X+\Delta))^{2+\varepsilon}}{X}}
    N^{-1}\left(\frac{NX}{p^2q^2}\right)^{\frac{1}{4}}
    \sum_{n\sim N}|\lambda_f(n)|(n,q)^{\frac{1}{2}}\nonumber\\
    &&+(1+|\beta|X)X^\varepsilon
    \max_{N\ll\frac{p^2q^2}{X}}
    N^{-1}\left(\frac{NX}{p^2q^2}\right)^{\frac{1}{2}}
    \sum_{n\sim N}|\lambda_f(n)|(n,q)^{\frac{1}{2}} \nonumber\\
    &\ll& (1+|\beta|X)(|\beta|X+\Delta)^{\frac{1}{2}}X^\varepsilon.
    \ena
Hence, we obtain that \eqref{tt} is controlled by
    \bna
    \mathbf{T}\ll pq^{\frac{3}{2}+\varepsilon}
    (1+|\beta|X)(|\beta|X+\Delta)^{\frac{1}{2}}X^\varepsilon.
    \ena
Inserting this into \eqref{major arcs2}, we derive the integration over major arcs
    \bna\label{major arcs result}
    \int_\mathfrak{M}F^{\ell}(\alpha)G(\alpha)d\alpha
    \ll p\left(1+\frac{X}{PQ}\right)
    \left(\frac{X^{1/2+\ell/2+\varepsilon}P}{Q^{3/2}}
    +\frac{X^{\ell/2+\varepsilon}P^{3/2}\Delta^{1/2}}{Q}\right).
    \ena
This completes the proof of Proposition \ref{major}.\\
\end{proof}

\subsection{Conclusion}
Assume that the parameters P and Q satisfy
    \bna
    PQ=X.
    \ena
By Proposition \ref{minor}, we have
    \bna
    \int_\mathfrak{m}F^{\ell}(\alpha)G(\alpha) \mathrm{d}\alpha\ll X^{1+\varepsilon}Q^{\frac{\ell-2}{2}}
    +X^{\frac{\ell+2}{4}+\varepsilon}.
    \ena
By Proposition \ref{major}, we have
    \bna
    \int_\mathfrak{M}F^{\ell}(\alpha)G(\alpha) \mathrm{d}\alpha
    \ll pX^{\frac{3}{2}+\frac{\ell}{2}+\varepsilon}Q^{-\frac{5}{2}}\Delta^{\frac{1}{2}}.
    \ena
Hence, we obtain
    \bna
    \mathcal{S}(X)&=&
    \int_\mathfrak{M}F^{\ell}(\alpha)G(\alpha) \mathrm{d}\alpha
    +\int_\mathfrak{m}F^{\ell}(\alpha)G(\alpha) \mathrm{d}\alpha\\
    &\ll&pX^{\frac{3}{2}+\frac{\ell}{2}+\varepsilon}Q^{-\frac{5}{2}}\Delta^{\frac{1}{2}}
    +X^{1+\varepsilon}Q^{\frac{\ell-2}{2}}
    +X^{\frac{\ell+2}{4}+\varepsilon}.
    \ena

With a suitable choice of
    \bna
    Q=p^{\frac{2}{\ell+3}}X^{\frac{\ell+1}{\ell+3}}\Delta^{\frac{1}{\ell+3}},
    \ena
we  obtain
    \bea\label{re}
    \mathcal{S}(X)\ll
    p^{\frac{\ell-2}{\ell+3}}X^{\frac{\ell}{2}-\frac{\ell-2}{\ell+3}+\varepsilon}
    \Delta^{\frac{\ell-2}{2(\ell+3)}}.
    \eea
We finish the proof of Theorem \ref{Th11}.

\section{Proof of Theorem \ref{Th12}}
Our task now is to show Theorem \ref{Th12}. Recall that $w(x)$ is a smooth function compactly supported in $[1/2, 1]$, say, such that $w(x)=1$ for $1/2+1/(8\Delta)\leq x\leq1-1/(8\Delta)$ and $
 w^{(j)}(x)\ll_j \Delta^j $ for $ j\geq 0$. Inserting the
individual bound \eqref{r} for $r_\ell(n)$, we can  write
    \bea\label{eq:th2}
    \sum_{X/2<n\leq X}\lambda_f(n)r_{\ell}(n)\chi(n)
    =\mathcal{S}(X)+O\left(X^{\frac{\ell}{2}-1}\Big(\sum_{X/2<n\leq X/2+X/(8\Delta)}+\sum_{X-X/(8\Delta)<n\leq X}\Big)|\lambda_f(n)|\right).\nonumber\\
    \eea
In view of \eqref{eq:th2}, we need to consider the short interval sum
    \bna
    \sum_{x<n\leq x+y}|\lambda_f(n)|,
    \ena
where $y\ll x$. The Rankin-Selberg estimates \eqref{CSRS} implies
    \bna
    \sum_{x<n\leq x+y}|\lambda_f(n)|^2\ll x.
    \ena
By means of Cauchy-Schwarz inequality, we deduce that
    \bna
    \sum_{x<n\leq x+y}|\lambda_f(n)|\ll (xy)^{\frac{1}{2}+\varepsilon}.
    \ena
Therefore, the $O$-term in \eqref{eq:th2} is controlled by
    \bna
    \ll X^{\frac{\ell}{2}}\Delta^{-\frac{1}{2}}.
    \ena
Combining \eqref{re} with the estimates of $\lambda_f(n)$ in short intervals, we obtain
    \bna
    \sum_{X/2<n\leq X}\lambda_f(n)r_{\ell}(n)\chi(n)
    \ll p^{\frac{\ell-2}{\ell+3}}\Delta^{\frac{\ell-2}{2(\ell+3)}}
    X^{\frac{\ell}{2}-\frac{\ell-2}{\ell+3}+\varepsilon}
    +X^{\frac{\ell}{2}}\Delta^{-\frac{1}{2}}.
    \ena
By choosing
    \bna
    \Delta=p^{\frac{2(2-\ell)}{2\ell+1}}X^{\frac{2(\ell-2)}{2\ell+1}},
    \ena
we derive
    \bea\label{re2}
    \sum_{X/2<n\leq X}\lambda_f(n)r_{\ell}(n)\chi(n)\ll p^{\frac{\ell-2}{2\ell+1}}X^{\frac{\ell}{2}-\frac{\ell-2}{2\ell+1}}.
    \eea
Theorem \ref{Th12} follows by splitting the sum over $n$ into sums over dyadic intervals and applying \eqref{re2} to each of them.

\end{document}